\DeclareMathOperator{\Pic}{Pic}
\DeclareMathOperator{\Proj}{Proj}
\DeclareMathOperator{\Aut}{Aut}
\DeclareMathOperator{\Gal}{Gal}
\DeclareMathOperator{\Br}{Br}
\DeclareMathOperator{\Hom}{Hom}
\DeclareMathOperator{\im}{im}
\DeclareMathOperator{\PGL}{PGL}
\DeclareMathOperator{\SL}{SL}
\newtheorem{theorem}{Theorem}
\newtheorem*{theorem*}{Theorem}
\newtheorem{observation}{Observation}
\newtheorem{proposition}[theorem]{Proposition}
\begin{document}

\title[short paper title]{On the arithmetic of one del Pezzo surface\\ over the field with three elements}

\author{Nikita Kozin}
\address{Department of Mathematics, Rice University, 
	Houston, TX, USA}
\email{nikita.kozin@rice.edu}
\author{Deepak Majeti}
\address{Department of Computer Science, Rice University, 
	Houston, TX, USA}
\email{deepak.majeti@rice.edu}


\begin{abstract}
We discuss the problem of existence of rational curves on a certain del Pezzo surface from a computational point of view and suggest a computer algorithm implementing search. In particular, our computations reveal that the surface contains 920 rational curves with parametrizations of degree $8$ and does not contain rational curves for a smaller degree.
\end{abstract}

\maketitle
\section{Introduction}\label{S:introduction}

\noindent
When one studies varieties over finite or number fields many questions about their rational points can be approached by a direct computation. These problems include computing rational points of bounded height, visualizing them or checking hypotheses on their structure (see e.g. \cite{Tschinkel} and \cite{Elsenhans_Cubic}).

The problem of finding parametrizations for rational curves on varieties is similar to the above. For the case of a plane curve given by the equation $f(x,y)=0$ the problem consists in finding a pair of rational functions in one variable $x=x(t)$ and $y=y(t)$ that would satisfy the defining equation:
\[
f(x(t),y(t))=0
\]
The reverse problem is called implicitization and in both cases looking for an effective algorithm that computes precise formulas is an active area of research \cite{RatAlgCurves}.

For higher dimensional varieties the existence of a rational curve implies the existence of a rational point.
Koll\'ar \cite{Kollar08} has analyzed the case of projective cubic hypersurfaces in $\mathbb P^n$ of degree $\le n$. For these hypersurfaces the existence of a rational point follows from the Chevalley-Warning theorem. In particular, Koll\'ar shows that a smooth cubic surface in $\mathbb P^3$ contains a rational curve of degree at most 216 through every point, given a large enough base field. Generally, rational parametrizations of curves on a variety $X$ can be considered as rational points in spaces $\Hom_d(\mathbb P^1, X)$, where $d$ is the degree of a map. These spaces are not compact and might contain no rational points. Therefore, it is natural to ask, what is the minimal value of $d$ for which the corresponding space has a rational point and how many points does it have.

In this paper we consider the problem of finding rational parametrizations of curves on a certain del Pezzo surface given by one equation over the field $\mathbb F_3$. The motivation comes from the recent result by Salgado, Testa and V{\'a}rilly-Alvarado who proved the unirationality of every smooth degree 2 del Pezzo surface over finite fields, with three exceptions. In particular, their construction derives the unirationality of the surface from the existence of a rational curve on it. Recently, Festi and van Luijk confirmed the unirationality of the remaining three cases by exhibiting a rational curve for every surface. Our interest lies in a~computational aspect of the problem. In this work we present an algorithm for finding all possible rational parametrizations for a given degree.


\section{Del Pezzo surfaces and unirationality}\label{S:motivation}
First, we review relevant notions and facts from the arithmetic geometry. A {\it{del Pezzo surface}} is a smooth projective algebraic variety of dimension two whose anticanonical class is ample. In particular, for $r\ge 3$ there exists an embedding of the surface as a degree $r$ surface in a projective space. It is known that the only possible values for the degree are $1\le r\le 9$. An $n$-dimensional variety $X$ over a field $k$ is called {\it unirational} if there exists a dominant map $\mathbb P^n \to X$ defined over $k$. This can be restated by saying that the field of functions of the surface $X$ is a subfield of a pure transcendental extension of $k$. A general reference on facts about del Pezzo surfaces is \cite{Manin}.

It is known that every del Pezzo surface of degree $r\ge 3$ containing a point is unirational (see e.g. \cite{Kollar02} for $r=3$). By a result of Manin \cite[Theorem 29.4]{Manin} a degree 2 del Pezzo surface is unirational given that it has a point not lying neither on exceptional curves nor on the ramification curve, which we discuss later. Recently, Salgado, Testa and V{\'a}rilly-Alvarado \cite{Alvarado} proved that for the case when $k$ is a finite field the answer is still positive with possibly three exceptions. In particular, following ideas of Manin, they show that the required dominant map can be constructed from a non-constant morphism
\begin{equation} \label{eq:curve}
f: \mathbb P^1 \longrightarrow X,
\end{equation}
and then make further analysis including the direct check of cases when such a map exists.

The image $f(\mathbb P^1)$ defines a {\it rational curve} on a surface $X$, by which we mean 1-dimensional geometrically integral subvariety of geometric genus 0. In particular, finding a map (\ref{eq:curve}) would imply unirationality of the surface. One of the exceptional three cases is the surface $X$ defined by the equation
\begin{equation}
-w^2 = x^4 + y^3z-yz^3 \label{eq:2}\\
\end{equation}
over the field $k=\mathbb F_3$ with three elements in a {\it weighted projective space} (see \cite{Harris} for a definition):
\[
X \hookrightarrow \mathbb P_k(1,1,1,2) = \Proj(k[x,y,z,w]).
\]
Therefore, every morphism (\ref{eq:curve}) can be considered as the map to the weighted projective space and it can be shown \cite[Exercise 1.3.10]{Kollar} that every morphism to this space is given by homogeneous polynomials in two variables
\begin{equation} \label{eq:3}
x = x(s,t), \quad y=y(s,t), \quad z=z(s,t), \quad w=w(s,t),
\end{equation}
where $x,y,z$ are of some degree $d$, and $w$ is of degree $2d$. That the image of the morphism lies in a surface $X$ means that expressions from (\ref{eq:3}) being substituted into equation (\ref{eq:2}) must satisfy it. If such a morphism (and hence a rational curve on $X$) exists, then by setting $s=1$ we obtain one-variable parametrizations
\begin{equation} \label{eq:4}
x = x(t), \quad y=y(t), \quad z=z(t), \quad w=w(t),
\end{equation}
such that $x,y,z$ are polynomials of degree $\le d$ and $w$ is a polynomial of degree $\le 2d$.

Since $k=\mathbb F_3$ is a finite field, there are only finitely many polynomials with coefficients in $k$ of a given degree $d$ and finding parametrizations (\ref{eq:4}) can be attacked by brute-force on a computer, at least for small values of $d$. This fact motivates us to formulate the question about unirationality of the surface $X$ from a computational point of view in the following way:
\vspace{1mm}
\begin{center}
\textit{For a given $d$, determine all possible polynomial parametrizations (\ref{eq:4}) that satisfy (\ref{eq:2}).}
\end{center}
\vspace{1mm}
The weighted projective space $\mathbb P(1,1,1,2)$ embeds into the usual projective space $\mathbb P^6$ as a complete intersection of three quadrics and in turn gives a projective embedding of the surface $X$ corresponding to the linear system $|{-2}K|$. If we have a rational curve parametrized by degree $(d,d,d,2d)$ homogeneous polynomials then the composite map
\[
\mathbb P^1 \rightarrow X \subset \mathbb P(1,1,1,2) \hookrightarrow \mathbb P^6
\]
maps $\mathbb P^1$ onto a degree $2d$ projective curve. In this paper by the {\it degree} of a rational curve we mean the value $d$ of the corresponding parametrization. The Picard group of the surface $X$ is isomorphic to $\mathbb Z$ and is generated by the anticanonical class. Any rational curve of degree $d$ would correspond to class $-(d/2)K$. In particular, looking for curves it makes sense to check among parametrizations of even degrees.

Precomposing the morphism $f$ with any automorphism of the curve would give different parametrization which, nevertheless, would correspond to the same curve on a surface. In particular, since the group of automorphisms $\Aut(\mathbb P^1)$ over $\mathbb F_3$ is isomorphic to $\mathfrak S_4$, the symmetric group on four letters, we conclude that to every curve on $X$ there would correspond 24 different parametrizations.

The number of polynomials grows exponentially with $d$, for example over the field $\mathbb F_3$, $d=8$ would require to check
\[
3^9\times 3^9 \times 3^9 \times 3^{17} \quad = \quad 3^{34} \quad \sim \quad 1.7 \times 10^{16}
\]
possible quadruples of polynomials. Thus the naive brute-force approach is not feasible.  We use some of the particular arithmetic properties of $X$ to reduce and optimize the brute-force approach (see Section 5 for more details).  In Section 6, we provide an algorithm based on this approach; the algorithm demonstrates that there are no rational curves for $d \le 7$ while for $d = 8$ there are exactly 920 (up to projective automorphisms) rational curves of degree 2d on $X$.

\section{Structure of the geometric Picard module}
The geometric Picard group $\Pic(\overline X)$ is isomorphic to $\mathbb Z^8$, as after passing to the algebraic closure of the ground field, $\overline X$ becomes isomorphic to the blow-up of $\mathbb P^2$ at seven points. In particular, $\Pic(\overline X)$ is freely generated by the classes of exceptional divisors and the pull-back of the class of the line on $\mathbb P^2$. On the other hand, $\overline X$ contains 56 exceptional curves (which become conics under the $-2K$-embedding) that also generate the Picard group. Therefore, to understand the action of Galois group on $\Pic(\overline X)$ it is sufficient to analyze its action on the 56 exceptional curves.

The linear system of the anticanonical class has dimension 3 and gives a map onto $\mathbb P^2$ which is a double cover branched along the smooth quartic. The equation of the quartic is the right part of (\ref{eq:2}):
\begin{equation} \label{eq:quartic}
x^4-yz^3+y^3z = 0.
\end{equation}
All 56 exceptional curves on the surface are the pre-images of the 28 {\it bitangent lines} to the branch quartic, that is lines having even intersection multiplicity with quartic at every common point. In particular, that means that extra relation induced by the line equation
\[
Ax+By+Cz=0
\]
would factor (\ref{eq:quartic}) up to constant into the product of two squares.
One case to look for bitangent lines is when the coefficient of $x$ is zero, $A=0$.  Here, we readily obtain four lines given by
\[
y = z \qquad y=-z \qquad y=0 \qquad z=0
\]
whose pre-images on the surface are given by
\[
L_{y=z}^\pm:
\begin{cases}
y=z\\
w = \pm \sqrt{2} x^2
\end{cases}
L_{y=-z}^\pm:
\begin{cases}
y=-z\\
w = \pm \sqrt{2} x^2
\end{cases}
L_{y=0}^\pm:
\begin{cases}
y=0\\
w = \pm \sqrt{2} x^2
\end{cases}
L_{y=0}^\pm:
\begin{cases}
z=0\\
w = \pm \sqrt{2} x^2
\end{cases}
\]

\vspace{-3mm}
\begin{figure}[hb]
\centerline{
\includegraphics[scale=0.75]{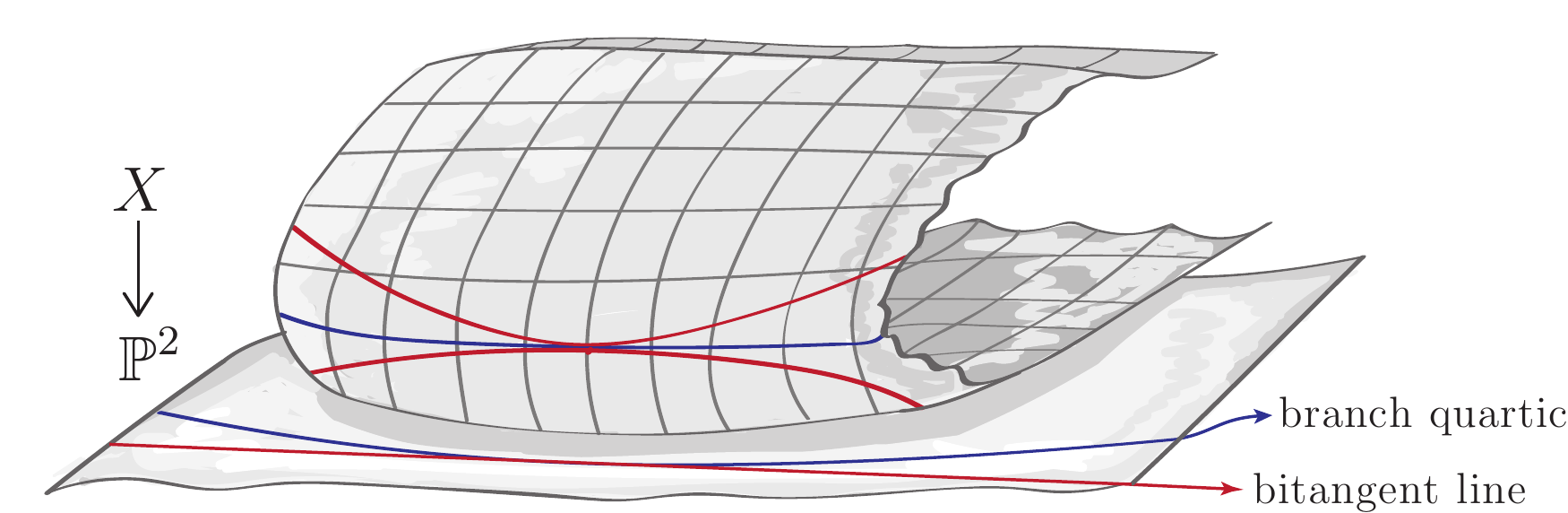}
}
\caption {Double cover of projective plane}
\end{figure}

The remaining $24$ bitangent lines must correspond to a non-zero coefficient $A\ne 0$. We will look for them in the form
\[
x = az + by.
\]
Plugging those into the equation of quartic and setting $y=1$ gives
\[
(az + b)^4 - z^3 + z=0
\]
which can be rewritten as
\begin{equation} \label{eq:expansion1}
a^4\left[z^4 + \left( \frac{b}{a} - \frac{1}{a^4}\right) z^3 + \left( \frac{b^3}{a^3} + \frac{1}{a^4}\right)z + \frac{b^4}{a^4}\right] = 0.
\end{equation}
Now, if it were to factor into squares
\[
a^4(z+\alpha)^2(z+\beta)^2 = 0,
\]
we would have
\[
a^4(z^4+(2\alpha + 2\beta)z^3 + (\alpha^2 + \alpha\beta + \beta^2)z^2 + (2\alpha\beta^2 + 2\alpha^2\beta)z + \alpha^2\beta^2) = 0.
\]
We notice that the coefficient by $z^2$ is the full square:
\[
\alpha^2 + \alpha\beta + \beta^2 = (\alpha-\beta)^2.
\]
Since this coefficient vanishes in expansion (\ref{eq:expansion1}) we deduce that $\alpha = \beta$ and look for expansion of the form
\[
a(z+a)^4 = a^4(z^4+\alpha z^3+\alpha^3z+\alpha^4) = 0.
\]
In particular, this would imply that the bitangent line intersects branch quartic exactly at one point with multiplicity 4 (see Figure 1). Comparing coefficients in the last expression with ones in (\ref{eq:expansion1}) we obtain
\begin{equation} \label{eq:ab_relations}
a^8 = -1, \qquad ab^3 - ba^3+1=0.
\end{equation}
Therefore, for a fixed $a=\sqrt[8]{-1}$, there are only three choices $b_1$, $b_2$, $b_3$ that define a bitangent line. In this case the equation (\ref{eq:quartic}) reads as
\[
-w^2 = a^4 \left(z + \left(b/a - 1/a^4\right)y\right)^4 = (x+a^5y)^4.
\]
This data defines the remaining 24 bitangent lines.

Over $\mathbb F_3$ the polynomial $x^8+1$ factors as follows:
\[
x^8 + 1 = (x^4+x^2+2)(x^4+2x^2+2).
\]
Denoting some fixed root for the first irreducible factor as $\zeta$ we notice that in the extension $\mathbb F_3(\zeta)$ the square root
\[
\sqrt{-1} = \pm (\zeta^2 + 2).
\]
All four roots of $x^4+x^2+2$ and four roots of $x^4+2x^2+2$ lie in $\mathbb F_3(\zeta)$ and are permuted cyclicly by the Frobenius automorphism that generates $\Gal( \mathbb F_3(\zeta) | \mathbb F_3) \cong \mathbb Z/4\mathbb Z$. For every choice of $a$ all three corresponding values of $b$ satisfying (\ref{eq:ab_relations}) also lie in $\mathbb F_3(\zeta)$, e.g. for $a=\zeta$, corresponding $b$'s are
\[
2\zeta^3, \quad 2\zeta^3+\zeta, \quad 2\zeta^3 + 2\zeta
\]
Therefore, all 56 exceptional curves are defined over $\mathbb F_3(\zeta)$ and we have their precise description which we summarize in the following proposition (the notation for the curves is chosen to correspond to the one in \cite{Tschinkel04}).
\begin{proposition}
The degree 2 del Pezzo surface (\ref{eq:2}) becomes rational over the degree 4 extension $\mathbb F_3(\zeta)$. The 56 exceptional curves are
\[
L_{y=z}^\pm \quad L_{y=-z}^\pm \quad L_{y=0}^\pm \quad L_{z=0}^\pm,
\]
as defined above, and
\[
L_{a,b}^\pm:
\begin{cases}
x=az+by,\\
w = \pm (\zeta^2+2) (x+a^5y)^2,
\end{cases}
\]
where $a^8=-1$ and $b$ satisfies $ab^3-a^3b+1=0$.
\end{proposition}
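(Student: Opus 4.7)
The proposition is essentially a bookkeeping statement that assembles the case analysis carried out immediately before it. My plan is to organize the argument in four steps.

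First, I would invoke the standard correspondence between exceptional curves on $X$ and bitangents to the branch locus: the anticanonical map exhibits $X$ as a double cover of $\mathbb P^2$ branched along the quartic (\ref{eq:quartic}), so every exceptional curve is a component of the preimage of a bitangent line, and each bitangent lifts to exactly two such curves. Since a smooth plane quartic carries $28$ bitangents, this accounts for all $56$ exceptional curves, and reduces the question to classifying the bitangent lines of (\ref{eq:quartic}) and then computing the $w$-coordinate on each lift.

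Second, I would collect the enumeration already performed. Lines with $A=0$ must be one of $y=\pm z$, $y=0$, $z=0$, producing the eight curves $L^\pm_{y=\pm z}$, $L^\pm_{y=0}$, $L^\pm_{z=0}$ after lifting. Lines with $A\neq 0$, written in the normalized form $x = az+by$, are precisely the solutions of the system
\begin{equation*}
a^8=-1, \qquad ab^3 - a^3 b + 1 = 0,
\end{equation*}
which the calculation preceding the proposition derives by matching coefficients with the square factorization $a(z+a)^4$. Since there are eight choices for $a$ and three choices of $b$ for each $a$, this gives $24$ bitangents and hence $48$ curves $L^\pm_{a,b}$, bringing the total to $56$ as required.

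Third, the content of the proposition is that all the coefficients above lie in the degree-$4$ extension $\mathbb F_3(\zeta)$. For this I would check two things. (i) The factorization $x^8+1 = (x^4+x^2+2)(x^4+2x^2+2)$ over $\mathbb F_3$ shows that $\mathbb F_3(\zeta) = \mathbb F_{3^4}$ is the splitting field of $x^8+1$, because Frobenius cyclically permutes the roots of each irreducible quartic factor. Hence every solution of $a^8=-1$ lies in $\mathbb F_3(\zeta)$. (ii) A direct substitution, exhibited for $a=\zeta$ with roots $2\zeta^3,\ 2\zeta^3+\zeta,\ 2\zeta^3+2\zeta$, shows that the cubic $ab^3-a^3b+1=0$ splits in $\mathbb F_3(\zeta)$; Galois-transitivity on the $a$'s then propagates this to every other value of $a$. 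The appearance of $\sqrt{2}=\sqrt{-1} = \pm(\zeta^2+2)$ in the formulas for $w$ is likewise an element of $\mathbb F_3(\zeta)$, so each lift $L^\pm_\bullet$ is defined over this field.

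Finally, for the rationality claim I would use the standard fact that a degree $2$ del Pezzo surface whose full geometric Picard lattice is defined over the base field admits seven pairwise disjoint $(-1)$-curves over that field; contracting them yields a birational morphism to $\mathbb P^2_{\mathbb F_3(\zeta)}$. Since the explicit list in (i) and (ii) shows that the geometric Picard lattice is defined over $\mathbb F_3(\zeta)$, rationality over this extension follows. The only genuine obstacle is the explicit solution of the system $a^8=-1$, $ab^3-a^3b+1=0$ inside $\mathbb F_3(\zeta)$, which is a short finite-field computation rather than a conceptual difficulty.
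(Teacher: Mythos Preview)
Your proposal is correct and follows essentially the same route as the paper: the proposition is a summary of the bitangent enumeration carried out in the preceding text, and you have organized those computations cleanly, adding an explicit justification (contracting seven disjoint $(-1)$-curves) for the rationality claim that the paper leaves implicit.

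One small caveat: in step (ii) you appeal to ``Galois-transitivity on the $a$'s'' to propagate the splitting of $ab^3-a^3b+1$ from $a=\zeta$ to all eight values of $a$. But Frobenius has two orbits of size four on the roots of $x^8+1$ (the roots of $x^4+x^2+2$ and of $x^4+2x^2+2$ respectively), so the argument as written only covers four of the eight $a$'s. The paper handles this the same way you do---it asserts the claim and exhibits the $b$'s only for $a=\zeta$---so you are no worse off, but strictly speaking one should either check a representative of the second orbit as well or note a symmetry linking the two orbits.
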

For concreteness, we outline all parameters $a$'s, their corresponding $b$'s and $\pm$ into the diagram shown on Figure 2. Inner boxes contain values of $a$, while 8 outer boxes contain corresponding values of $b$'s, three for each $a$. The Frobenius automorphism permutes values as shown by arrows. The values of $b$'s in boxes are ordered in such a way that $F$ permutes boxes preserving the order. Because $F(\sqrt{-1}) = -\sqrt{-1}$, Frobenius maps every curve $L^\pm$ to $L^\mp$. Every conjugate pair (that is, the pair of exceptional curves that are pre-images of the same bitangent) intersects at one point with multiplicity 2. At the same time every non-conjugate pair has either intersection 0 or 1.

\begin{figure}
\centering
\includegraphics[scale=1.2]{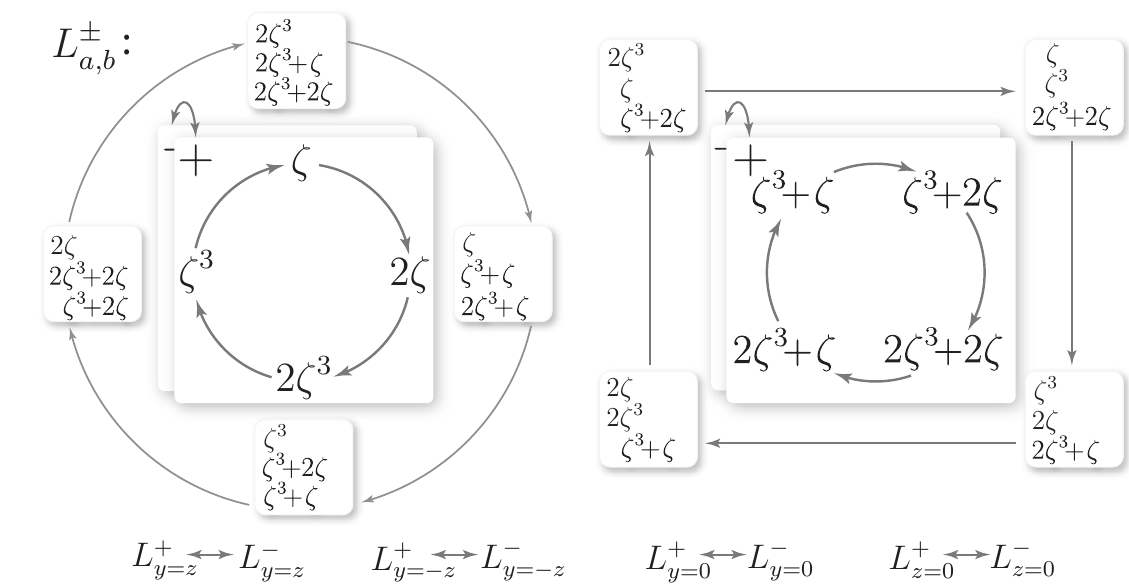}
\caption {Frobenius action on the exceptional curves}
\end{figure}

In the group of eight curves $L_{y=z}^\pm$, $L_{y=-z}^\pm$, $L_{y=0}^\pm$, $L_{z=0}^\pm$ the non-conjugate pairs intersect at one point if and only if corresponding signs $\pm$ coincide. In particular, $L_{y=z}^+$, $L_{y=-z}^+$, $L_{y=0}^+$, $L_{z=0}^+$ intersect at the point $[1:0:0:+\sqrt{-1}]$, and $L_{y=z}^-$, $L_{y=-z}^-$, $L_{y=0}^-$, $L_{z=0}^-$ intersect at the point $[1:0:0:-\sqrt{-1}]$; these points are called generalized Eckardt points. Every curve $L^\pm_{a,b}$ intersects each of $L^\pm_{y=z}$ and $L^\pm_{y=-z}$ and $L^\pm_{y=0}$ with the same $\pm$-sign at one point.

The rest of the intersections is more subtle. Direct check indicates that when $a_1\ne a_2$ every curve $L^\pm_{a_1,b_1}$ from the group of six curves $L^\pm_{a_1,b}$ intersects exactly three curves from the group of six curves $L^\pm_{a_2,b}$, one with the same $\pm$-sign and two other with the opposite $\pm$-sign. When $a_1=a_2$, the curve $L^\pm_{a_1,b_1}$ intersects exactly three curves from its group: itself and two other with the same $\pm$-sign (but different $b$'s). Finally, $L^\pm_{z=0}$ intersects exactly three curves in the group of six $L^\pm_{a,b}$.

There is no general pattern determining which $L^\pm_{a_2,b_2}$ would intersect concrete $L^\pm_{a_1,b_1}$ but for the specific pair of curves intersection point (if exists) can readily be checked by a direct computation. In particular, now we can pick concrete curves that would form an orthogonal basis for geometric Picard group.

\begin{proposition}
The geometric Picard group $\Pic(\overline X)\cong \mathbb Z^8$ is generated by classes of the following curves
\[
d_1 = L^+_{1,1} \quad d_2 = L^+_{5,2} \quad d_3 = L^+_{2,2} \quad d_4 = L^+_{4,2} \quad d_5=L^+_{6,1} \quad d_6= L^+_{8,3} \quad d_7 = L^-_{z=0}
\]
\[
d_8 = L^+_{1,1} + L^+_{5,2} + L^+_{3,3}
\]
such that the intersection numbers are $(d_i,d_i)=-1$ for $1\le i \le 7$, $(d_8,d_8)=1$ and $(d_i,d_j)=0$ for $i\ne j$. The anticanonical class is given by
\[
-K_{\overline X} = -d_1 - d_2 - d_3 - d_4 - d_5 - d_6 - d_7 + 3d_8.
\]
\end{proposition}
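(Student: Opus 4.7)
The plan is to verify the claimed intersection matrix for $d_1,\dots,d_8$ and then conclude that they form a $\mathbb Z$-basis by a lattice argument. The intersection form on $\Pic(\overline X)$ is unimodular of signature $(1,7)$; the proposed Gram matrix $\mathrm{diag}(-1,-1,-1,-1,-1,-1,-1,+1)$ also has determinant $-1$, so once the intersection numbers are confirmed the sublattice $\langle d_1,\dots,d_8\rangle$ is of index one in $\Pic(\overline X)$ and therefore equals it. The anticanonical class is then pinned down by its intersections with this basis.

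The self-intersections $(d_i,d_i)=-1$ for $1\le i\le 7$ are immediate since each $d_i$ is an exceptional curve. Mutual orthogonality among $d_1,\dots,d_7$ is verified case by case. For the fifteen pairs within $d_1,\dots,d_6$, the parameters $a$ are pairwise distinct, so by the dichotomy established in the preceding discussion each $d_i$ meets exactly one curve $L^+_{a_j,\cdot}$ in the opposing group; substituting the equations $x=az+by$, $w=(\zeta^2+2)(x+a^5y)^2$ into each other and using the relations $a^8=-1$ and $ab^3-a^3b+1=0$ shows that for the specific $(a_j,b_j)$ in our list no common solution exists. The disjointness of $d_7=L^-_{z=0}$ from each $d_i$ with $i\le 6$ reduces to the observation that $L^-_{z=0}$ meets only three of the twenty-four $L^\pm_{a,b}$, none of which is among $d_1,\dots,d_6$.

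For $d_8$ one expands
\[
(d_8,d_8)=-3+2\bigl[(L^+_{1,1},L^+_{5,2})+(L^+_{1,1},L^+_{3,3})+(L^+_{5,2},L^+_{3,3})\bigr],
\]
so it suffices to check, by the same substitution procedure, that exactly two of the three summand pairs meet, giving $(d_8,d_8)=1$. Orthogonality of $d_8$ to each $d_i$ with $i\le 7$ splits into two cases: when $d_i$ is one of the three summands of $d_8$ the contribution $-1$ from the self-intersection must be cancelled by the sum of the two remaining intersections equalling $+1$; when $d_i$ is not a summand each of the three summands must be disjoint from $d_i$. Both are verified by the same direct intersection computation. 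Once the basis is in hand the anticanonical formula is automatic: adjunction gives $(-K_{\overline X},E)=1$ for every exceptional curve $E$, so writing $-K_{\overline X}=\sum c_i d_i$ and pairing with each $d_i$ forces $c_i=-1$ for $1\le i\le 7$ and $c_8=(-K_{\overline X},d_8)=1+1+1=3$, as claimed; one may check consistency via $(-K_{\overline X})^2=9-7=2$.

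The main obstacle is purely computational: each pairwise intersection of two curves $L^+_{a_1,b_1}$ and $L^+_{a_2,b_2}$ amounts to solving an elementary system of linear and quadratic equations in the quartic extension $\mathbb F_3(\zeta)$, but the number of pairs to inspect (the fifteen pairs among $d_1,\dots,d_6$, the three pairs of summands of $d_8$, and the cross-pairings $(d_i,d_8)$) is large enough that systematic bookkeeping — for instance along the lines suggested by Figure 2 — is essential. The conceptual ingredients, namely the unimodularity-index argument and the adjunction-based anticanonical computation, are both very short.
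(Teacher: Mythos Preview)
Your approach is essentially what the paper does: the proposition is stated there without a formal proof, the intersection numbers being taken as consequences of the preceding case analysis of how the 56 exceptional curves meet, and your unimodularity/index argument together with the adjunction computation for $-K_{\overline X}$ supply exactly the two short conceptual steps the paper leaves implicit.

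One factual slip to correct. You write that $L^-_{z=0}$ meets only three of the twenty-four curves $L^\pm_{a,b}$. The paper's description says that $L^\pm_{z=0}$ meets three of the six curves $L^\pm_{a,b}$ \emph{for each fixed value of} $a$, hence twenty-four of the forty-eight in total. Consequently the orthogonality $(d_7,d_i)=0$ for $i\le 6$ is not the triviality you describe: for each $a_i$ you must check that the particular curve $L^+_{a_i,b_i}$ is not among the three (out of six) in its $a$-group that $L^-_{z=0}$ hits. This is still a finite direct check in $\mathbb F_3(\zeta)$ of the same flavour as the others, so your overall strategy is unaffected, but the bookkeeping for $d_7$ is on the same footing as for the pairs among $d_1,\dots,d_6$ rather than a one-line observation.
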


\noindent
Observing the action of the Frobenius automorphism on the equations of curves and computing intersections with curves generating classes we deduce that
\[
\begin{matrix}
d_1 = L^+_{1,1} & \longrightarrow & L^-_{3,1} = &-d_1-d_2 -d_4-d_6-d_7+2d_8 \\
d_2 = L^+_{5,2} & \longrightarrow & L^-_{7,2} = &-d_2-d_4 -d_5-d_6-d_7+2d_8 \\
d_3 = L^+_{2,2} & \longrightarrow & L^-_{4,2} = &-d_1-d_2 -d_3-2d_4-d_5-d_6-d_7+3d_8 \\
d_4 = L^+_{4,2} & \longrightarrow & L^-_{6,2} = &-d_4-d_7 -d_8\\
d_5 = L^+_{6,1} & \longrightarrow & L^-_{8,1} = &-d_1-d_2 -d_4-d_5-d_7+2d_8 \\
d_6 = L^+_{8,3} & \longrightarrow & L^-_{2,3} = &-d_1-d_4 -d_5-d_6-d_7+2d_8 \\
d_7 = L^-_{z=0} & \longrightarrow & L^+_{z=0} = &-d_1-d_2-d_3-d_4 -d_5-d_6-2d_7+3d_8 \\
\end{matrix}
\]
\[
\begin{matrix}
d_8 = L^+_{1,1}+L^+_{5,2}+L^+_{3,3} \longrightarrow
 L^-_{3,1}+L^-_{7,2}+L^-_{5,3} = -2d_1 -2d_2-d_3-3d_4-2d_5-2d_6-3d_7+6d_8
\end{matrix}
\]
This observation gives the action of the Galois group on $\Pic(\overline X)$.
\begin{proposition}
The action of Galois group factors through the cyclic group $\mathbb Z/4\mathbb Z$ generated by the Frobenius automorphism. The action of the generator with respect to the basis above is given by the following unimodular matrix
\begin{equation} \label{eq:mymatrix}
\begin{bmatrix}
-1& &-1& &-1&-1&-1&-2\\
-1&-1&-1& &-1& &-1&-2\\
 & &-1& & & &-1&-1 \\
-1 & -1 & -2 & -1 & -1 & -1& -1& -3 \\
 &-1&-1& &-1&-1&-1&-2 \\
-1&-1&-1& & &-1&-1&-2 \\
-1&-1&-1&-1&-1&-1&-2&-3\\
+2&+2&+3&+1&+2&+2&+3&+6
\end{bmatrix}
\end{equation}
\end{proposition}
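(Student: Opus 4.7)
The plan is threefold: first reduce the Galois action to a cyclic group of order four; then compute the Frobenius image of each of $d_1,\dots,d_8$ as an explicit divisor; and finally re-express those images in the basis by using the orthogonality from Proposition~2.

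Since Proposition~1 gives defining equations for all 56 exceptional curves over $\mathbb F_3(\zeta)$, and their classes span $\Pic(\overline X)$, the continuous action of $\Gal(\overline{\mathbb F_3}/\mathbb F_3)$ on the lattice $\Pic(\overline X)$ factors through $\Gal(\mathbb F_3(\zeta)/\mathbb F_3)\cong \mathbb Z/4\mathbb Z$, generated by the Frobenius $F$. For the curve-level action I would track $F$ on the coordinates of the defining equations: substituting $\zeta\mapsto\zeta^3$ in the parameters of $L^\pm_{a,b}$ gives $a\mapsto a^3$ and $b\mapsto b^3$ (this is precisely the cyclic permutation displayed by the arrows in Figure~2), while the identity $(\zeta^2+2)^2=-1$ together with $F(\zeta^2+2)=\zeta^6+2=-(\zeta^2+2)$ shows that $F$ negates $\sqrt{-1}$ and therefore swaps the $\pm$-decorations. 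This yields the explicit images $F(L^+_{a,b})=L^-_{a^3,b^3}$ and $F(L^\pm_{z=0})=L^\mp_{z=0}$ already tabulated in the excerpt preceding the statement.

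The conversion of each image into basis coordinates uses the orthogonality $(d_i,d_j)=-\delta_{ij}$ for $i,j\le 7$, together with $(d_8,d_8)=1$ and $(d_i,d_8)=0$: any class $D\in\Pic(\overline X)$ then admits the unique expansion
\[
D = -\sum_{i=1}^{7}(D\cdot d_i)\,d_i + (D\cdot d_8)\,d_8.
\]
So each of the first seven columns of~(\ref{eq:mymatrix}) is obtained from the eight intersection numbers $F(d_j)\cdot d_i$, most of which are $0$ or $1$ depending on whether the two exceptional curves share a bitangent and whether their $\pm$-signs match, with the rare self-intersection contribution $-1$ when $F(d_j)$ coincides with a basis curve, and with $F(d_j)\cdot d_8$ computed linearly from $d_8=L^+_{1,1}+L^+_{5,2}+L^+_{3,3}$. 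The eighth column follows from linearity, since $F(d_8)=L^-_{3,1}+L^-_{7,2}+L^-_{5,3}$, so it equals the sum of the three columns already found for those three curves.

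The main obstacle is the bookkeeping of intersections among specific pairs $L^\pm_{a_1,b_1}$, $L^\pm_{a_2,b_2}$: as the paper stresses, no closed rule singles out which particular neighbors intersect, so each numerical entry must be verified either by intersecting the underlying bitangents on $\mathbb P^2$ or from the incidence information encoded in Figure~2. Two useful \emph{a posteriori} consistency checks are that the resulting matrix must be unimodular, since it is induced by a lattice automorphism, and that it must satisfy $F^4=I$.
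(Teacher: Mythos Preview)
Your proposal is correct and follows essentially the same approach as the paper: the authors likewise note that all exceptional curves are defined over $\mathbb F_3(\zeta)$ so the action factors through $\mathbb Z/4\mathbb Z$, then apply Frobenius to the defining equations of each $d_i$ (tracking $a\mapsto a^3$, $b\mapsto b^3$ via Figure~2 and the sign flip from $F(\sqrt{-1})=-\sqrt{-1}$), and finally convert each image into basis coordinates by computing intersection numbers against the generators. Your explicit orthogonal expansion formula and the unimodularity/$F^4=I$ checks are helpful elaborations, but the argument is the same as the paper's.
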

\noindent
The trace of the above matrix equals $-2$. In our case, Weil's formula on number of points on varieties over the finite field $\mathbb F_q$:
\[
\#X(\mathbb F_q) = q^2 + q\cdot\text{trF} + 1,
\]
reads
\[
\#X(\mathbb F_3) = 3^2 +3\cdot (-2) + 1 =4
\]
and it can be easily checked that the surface contains exactly four rational points: $\thickmuskip=1mu [0:1:0:0]$, $\thickmuskip=1mu [0:0:1:0]$, $\thickmuskip=1mu [0:1:1:0]$ and $\thickmuskip=1mu [0:1:2:0]$.

\section{Brauer group and automorphisms}
The Brauer group $\Br X$ of the surface $X$ by definition is the second \'etale cohomology group $\mathrm H^2(X,\mathbb G_m)$. It can be shown that in our case this group is isomorphic to the Galois cohomology group $\mathrm{H}^1(\mathbb F_3, \Pic(\overline X))$ (see e.g. chapter 43 in \cite{Manin}). Although it is not used in the computer search for rational curves, we would like to include its computation as well as the  group of automorphisms for the sake of completeness

The result of the previous section describing the module $\Pic (\overline X))$ structure under the action of the group $\mathbb Z/4\mathbb Z$ allows us to compute it directly from projective resolution of the group $\mathbb Z$. Denoting $G=\mathbb Z/4\mathbb Z$ with fixed generator $m$, the matrix (\ref{eq:mymatrix}), we have the following resolution of $\mathbb Z$

\centerline{
\includegraphics{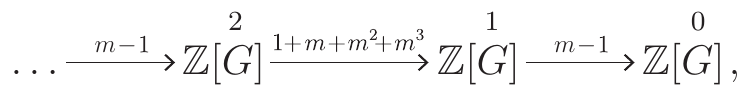}
}
\noindent
where the maps are as indicated. Since the functor of invariants $M^G$ of the modulo $M=\Pic(\overline X)=\mathbb Z^8$ is given by $\Hom_G(\mathbb Z,M)$ we can apply $\Hom_G(\cdot,M)$ to the above resolution obtaining the sequence
\[
\ldots \leftarrow \mathbb Z^8 \leftarrow \mathbb Z^8 \leftarrow \mathbb Z^8,
\]
and thus
\[
\mathrm{H}^1(G,M) \cong \ker(1+m+m^2+m^3) / \im(m-1)
\]
Direct computation indicates that $ \ker(1+m+m^2+m^3)$ and $\im(m-1)$ are generated by the columns of the matrices
\[
\begin{bmatrix}
1&&&&&&2&-1\\
&1&&&&&2&-1\\
&&1&&&&2&-1\\
&&&1&&&2&-1\\
&&&&1&&2&-1\\
&&&&&1&2&-1\\
&&&&&&3&-1
\end{bmatrix}
\qquad
\text{and}
\qquad
\begin{bmatrix}
1&&&&&3&5&-3\\
&1&&&1&2&2&-2\\
&&1&&1&3&1&-2\\
&&&1&1&3&4&-3\\
&&&&2&2&5&-3\\
&&&&&4&2&-2\\
&&&&&&6&-2
\end{bmatrix},
\]
respectively. Expressing the columns of the second matrix in terms of the columns of the first matrix allows us to compute that the quotient is isomorphic to $\mathbb Z/4\mathbb Z\times \mathbb Z/4\mathbb Z$.

Every automorphism of $X$ is either involution $w \rightarrow -w$ corresponding to the double cover induced by anticanonical class, or can be lifted to the projective automorphism of the branch quartic (\ref{eq:quartic}). The last case includes all projective plane transformations $\PGL(3,3)$ that fix the equation of quartic up to a constant multiple. As the last group has finitely many elements, these elements can be found directly. In particular, we observe that all transformation matrices with determinant 1 preserving the equation have block form
\[
\begin{bmatrix}
1 & & \\
 & a_{22} & a_{23} \\
 & a_{32} & a_{33}
\end{bmatrix}
\]
where the lower $2\times 2$ matrices are elements of the group $\SL(2,3)$ and correspond to linear transformation on variables $y$ and $z$. These transformations commute with involution. We summarize above observations in the following proposition.
\begin{proposition}
The Brauer group of the surface $X$ is isomorphic to the group $\mathbb Z/4\mathbb Z \times \mathbb Z/4\mathbb Z$, while the $\mathbb F_3$-automorphism group is isomorphic to $\mathbb Z/2\mathbb Z\times \SL(2,3)$.
\end{proposition}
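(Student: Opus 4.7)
For the Brauer group, the excerpt has already reduced $\Br X$ to $H^1(G,M)=\ker N/\im(m-I)$ where $G=\mathbb Z/4\mathbb Z$ acts on $M=\Pic(\overline X)=\mathbb Z^8$ by the matrix (\ref{eq:mymatrix}) and $N=I+m+m^2+m^3$, and it has displayed explicit generators for both subgroups. My plan is to finish by expressing each generator of $\im(m-I)$ as an integer combination of the generators of $\ker N$, yielding an $8\times 8$ transition matrix. Its Smith normal form, computed by routine row and column operations, has elementary divisors $(1,1,1,1,1,1,4,4)$, so the quotient is $\mathbb Z/4\mathbb Z\oplus\mathbb Z/4\mathbb Z$.

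For the automorphism group, the anticanonical system realizes $\pi:X\to\mathbb P^2$ as a $2{:}1$ cover branched along $B=\{F=0\}$ with $F=x^4+y^3z-yz^3$. Every $\sigma\in\Aut_{\mathbb F_3}(X)$ preserves $K_X$ and hence either fixes every fiber of $\pi$ (so $\sigma\in\langle\iota\rangle$, where $\iota\colon w\mapsto -w$ is the covering involution) or descends to an element $\bar\sigma\in\mathrm{Stab}_{\PGL_3(\mathbb F_3)}(B)$. This yields the short exact sequence
\[
1\longrightarrow\langle\iota\rangle\longrightarrow\Aut_{\mathbb F_3}(X)\longrightarrow\mathrm{Stab}_{\PGL_3(\mathbb F_3)}(B)\longrightarrow 1.
\]

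The central step is to show $\mathrm{Stab}_{\PGL_3(\mathbb F_3)}(B)\cong\SL_2(\mathbb F_3)$. Note that $B(\mathbb F_3)=\{[0:0:1],[0:1:0],[0:1:1],[0:1:2]\}$ consists of four collinear points on the line $\{x=0\}$, which is the unique line in $\mathbb P^2$ containing them. Every $\bar\sigma\in\mathrm{Stab}_{\PGL_3(\mathbb F_3)}(B)$ permutes $B(\mathbb F_3)$, hence preserves $\{x=0\}$ setwise; lifting to $M\in\GL_3(\mathbb F_3)$ forces the first row of $M$ to equal $(a_{11},0,0)$. Writing $M^*F=a_{11}^4x^4+g$ where $g=(My)^3(Mz)-(My)(Mz)^3$, expanding cubes via the Freshman's dream $(\sum_ic_ix_i)^3=\sum_ic_i^3x_i^3$ in characteristic 3, and matching the coefficients of $x^3y,x^3z,xy^3,xz^3$ in $\lambda F$ (all zero) yields, after using $a^3=a$, the two relations $a_{21}a_{32}=a_{22}a_{31}$ and $a_{21}a_{33}=a_{23}a_{31}$; together with the invertibility of $M$ these force $a_{21}=a_{31}=0$, so $M$ is block diagonal. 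The remaining equations reduce to $\det M'=a_{11}^4=1$ for the lower $2\times 2$ block $M'$, and the scalar ambiguity $\{\pm I\}\subset\PGL_3(\mathbb F_3)$ lets us normalize $a_{11}=1$. A direct check using the same Frobenius expansion confirms every block-diagonal matrix with $M'\in\SL_2(\mathbb F_3)$ preserves $F$ exactly, identifying the stabilizer with $\SL_2(\mathbb F_3)$.

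Finally, the section $M'\mapsto\tilde M'\in\Aut(X)$ sending $M'\in\SL_2(\mathbb F_3)$ to the automorphism acting as $M'$ on $(y,z)$ and trivially on $x$ and $w$ commutes with $\iota$, so the sequence splits as a direct product: $\Aut_{\mathbb F_3}(X)\cong\mathbb Z/2\mathbb Z\times\SL_2(\mathbb F_3)$. The main technical obstacle is the coefficient-matching step in the stabilizer computation; the rational-point reduction avoids grinding through all nine parameters of $M$ simultaneously, and the characteristic-3 Frobenius identity keeps the polynomial expansions compact enough to see through.
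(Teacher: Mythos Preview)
Your proof is correct. For the Brauer group you do exactly what the paper does: express the displayed generators of $\im(m-1)$ in terms of those of $\ker(1+m+m^2+m^3)$ and read off the quotient, which you phrase via the Smith normal form.

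For the automorphism group your route differs from the paper's. The paper simply notes that $\PGL_3(\mathbb F_3)$ is finite and says the stabilizer of the quartic ``can be found directly,'' then records the resulting block shape with $\SL_2(\mathbb F_3)$ in the lower corner; it is effectively an enumeration over the $5616$ elements of $\PGL_3(\mathbb F_3)$. You instead observe that $B(\mathbb F_3)$ consists of four collinear points, forcing any $\bar\sigma$ to preserve the line $\{x=0\}$, and then use the Frobenius identity in characteristic~$3$ to reduce the coefficient comparison to the two relations $a_{21}a_{32}=a_{22}a_{31}$ and $a_{21}a_{33}=a_{23}a_{31}$, which together with invertibility force $a_{21}=a_{31}=0$. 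This is a genuinely cleaner argument: it explains \emph{why} the block structure appears, and it would survive replacing $\mathbb F_3$ by a larger base field where enumeration is infeasible. The paper's approach, by contrast, is shorter to state and leaves no room for algebraic slips, but gives no insight.

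One small remark on exposition: your ``either fixes every fiber of $\pi$ \dots\ or descends'' is a false dichotomy as written---every $\sigma$ descends, and those fixing the fibers are exactly the kernel---but the exact sequence you write immediately after is the correct statement, and your explicit section both establishes surjectivity (since you verified $\lambda=a_{11}^4=1$, the lift stays over $\mathbb F_3$) and shows the splitting is direct.
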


\section{Arithmetic properties of the surface}
Equation (\ref{eq:2}) can be written as
\begin{equation} \label{eq:main}
x^4 + w^2 = yz^3 - y^3z
\end{equation}
such that it has two sides, left and right, each containing only two variables. A similar equation was analyzed by Elsenhans and Jahnel \cite{Elsenhans} who were interested in integer solutions to
\[
x^4 + 2y^4 = z^4 + 4w^4
\]
and attacked the problem using computer by calculating separately lists of left and right hand sides, splitting them modulo large prime $p=30011$ (so that lists can be partitioned into subsets that fit into memory), and computing their intersections. In our context solutions belong to the ring of polynomials $\mathbb F_3[t]$ rather than integers $\mathbb Z$, but the idea of pre-computing left and right hand sides and finding their set-theoretical intersection still can be applied. The search range can also be significantly reduced by noticing some arithmetical properties of the surface given by equation (\ref{eq:main}), which we state in the form of observations. Because of the isomorphism $\mathbb F_3 \cong \mathbb Z/3\mathbb Z$ it is convenient to treat polynomials as having integer coefficients modulo 3.

Notice first that the right hand side of (\ref{eq:main}) is antisymmetric:
\begin{observation}
If $(x, y, z, w)$ satisfies (\ref{eq:main}), then $(x, z, -y, w)$ also satisfies it.
\end{observation}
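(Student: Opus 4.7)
The plan is a direct substitution check; there is no real difficulty to speak of. The left-hand side of (\ref{eq:main}), namely $x^4+w^2$, depends only on the variables $x$ and $w$, both of which are fixed by the transformation $(x,y,z,w)\mapsto(x,z,-y,w)$. Hence the LHS is automatically preserved, and the whole content of the observation lies in verifying that the RHS $yz^3-y^3z$ is invariant under $(y,z)\mapsto(z,-y)$.

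For the RHS, I would just substitute and simplify:
\[
z(-y)^3-z^3(-y)=-zy^3+yz^3=yz^3-y^3z.
\]
Therefore if $(x,y,z,w)$ satisfies (\ref{eq:main}), so does $(x,z,-y,w)$.

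If I wanted to explain \emph{why} this works rather than just check it, I would point to the factorization $yz^3-y^3z=yz(z-y)(z+y)$, which exhibits the RHS as a polynomial whose zero locus in the $(y,z)$-plane is the union of four lines through the origin that is stable under the order-$4$ linear map $(y,z)\mapsto(z,-y)$ (a rotation by $90^\circ$); one then only needs to check that the sign of the monomial matches, which is immediate from the computation above. The main (non-)obstacle is thus conceptual rather than technical: the observation is recorded because it produces a concrete $\mathbb Z/2\mathbb Z$-symmetry (indeed, part of a larger symmetry) of the parametrization problem that can be exploited in Section~6 to cut the brute-force search space, not because its proof requires any argument beyond a one-line substitution.
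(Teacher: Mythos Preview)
Your proof is correct; the paper itself offers no proof for this observation, simply recording it as immediate after remarking that the right-hand side is antisymmetric, and your one-line substitution is precisely the check any reader would make. One small quibble in your closing commentary: the map $(y,z)\mapsto(z,-y)$ has order~$4$, so the induced symmetry on parametrizations is a $\mathbb Z/4\mathbb Z$ (not $\mathbb Z/2\mathbb Z$) action, though this does not affect the proof.
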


\noindent
Secondly, we notice that for coefficients modulo 3, the evaluated polynomial in the right hand side will always have vanishing constant term:
\begin{observation}
For any $y(t), z(t)$ the right part of (\ref{eq:main}) will have vanishing constant term.
\end{observation}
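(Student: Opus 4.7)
The plan is to compute the constant term of the right-hand side of (\ref{eq:main}) by evaluation at $t=0$ and then invoke Fermat's little theorem in $\mathbb{F}_3$. Concretely, the constant term of $y(t)z(t)^3 - y(t)^3 z(t)$ is obtained by substituting $t=0$, giving $y(0)\,z(0)^3 - y(0)^3\,z(0)$. Setting $a = y(0)$ and $b = z(0)$, both elements of $\mathbb{F}_3$, the task reduces to verifying the identity $a b^3 - a^3 b = 0$ for every $(a,b) \in \mathbb{F}_3 \times \mathbb{F}_3$.

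That identity is immediate from Fermat's little theorem: every $c \in \mathbb{F}_3$ satisfies $c^3 = c$, so $a b^3 = a b = a^3 b$ and the difference vanishes. Equivalently, one may factor $yz^3 - y^3 z = y z (z-y)(z+y)$ and check directly that for each of the nine pairs $(a,b) \in \{0,1,2\}^2$ at least one of the factors $a$, $b$, $b-a$, $b+a$ is zero modulo $3$; in particular if $a,b$ are both nonzero then either $a=b$ (so $b-a=0$) or $\{a,b\} = \{1,2\}$ (so $a+b = 0$).

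There is essentially no obstacle here: the content of the observation is simply that the polynomial $UV^3 - U^3V \in \mathbb{F}_3[U,V]$ lies in the kernel of evaluation at every $\mathbb{F}_3$-point, which is the defining feature of the prime field via Fermat. The value of the observation for the search algorithm of Section~6 is practical: it rules out any candidate parametrization whose left-hand side $x(t)^4 + w(t)^2$ has a nonzero constant term, cutting down the brute-force search space.
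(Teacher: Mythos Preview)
Your proof is correct and follows essentially the same route as the paper: both reduce to the constant term $a_0b_0^3 - a_0^3b_0$ (your $ab^3 - a^3b$) and invoke Fermat's little theorem in $\mathbb{F}_3$ to conclude it vanishes. The only cosmetic difference is that you phrase the extraction of the constant term as evaluation at $t=0$, while the paper writes out the polynomial expansion explicitly; your additional factorization remark is a harmless alternative check.
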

\begin{proof}
If
\[
y = a_mt^m + \ldots + a_1t + a_0, \quad z = b_nt^n + \ldots + b_1t + b_0
\]
then
\[
yz^3 - y^3z = ( \ldots + a_1t + a_0)(+ \ldots + b_1t + b_0)^3 - (\ldots + a_1t + a_0)^3(\ldots + b_1t + b_0)=
\]
\[
=\ldots + a_0b_0^3 - a_0^3b_0
\]
But cubes modulo 3 are equal to first powers by Fermat's little theorem. Hence the constant term vanishes.
\end{proof}

\noindent
On the other hand, the same argument shows that the leading term of the right hand side vanishes as well:
\begin{observation}
For any $y(t)$, $z(t)$ of degrees exactly $d$ the coefficient by $t^{4d}$ in the right hand side of (\ref{eq:main}) vanishes.
\end{observation}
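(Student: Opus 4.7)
The plan is to mirror the argument used for the previous observation, but applied to the leading coefficients instead of the constant terms. Write
\[
y(t) = a_d t^d + a_{d-1} t^{d-1} + \cdots + a_0, \qquad z(t) = b_d t^d + b_{d-1} t^{d-1} + \cdots + b_0,
\]
with $a_d, b_d \neq 0$ so that $\deg y = \deg z = d$ exactly. Then the products $y z^3$ and $y^3 z$ both have degree $4d$, and the coefficient of $t^{4d}$ in each can be read off directly from the leading terms.

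Specifically, the coefficient of $t^{4d}$ in $y z^3$ is $a_d b_d^3$, and the coefficient of $t^{4d}$ in $y^3 z$ is $a_d^3 b_d$. Hence the coefficient of $t^{4d}$ in the right hand side of (\ref{eq:main}) is
\[
a_d b_d^3 - a_d^3 b_d.
\]
By Fermat's little theorem in $\mathbb{F}_3$ we have $a_d^3 = a_d$ and $b_d^3 = b_d$, so this expression equals $a_d b_d - a_d b_d = 0$. There is no real obstacle here; the observation is an immediate consequence of cubing being the identity on $\mathbb{F}_3$, which is exactly the same mechanism that killed the constant term in the preceding observation, now applied to the top degree coefficient instead of the bottom one.
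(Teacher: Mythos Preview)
Your proof is correct and is exactly the argument the paper intends: it explicitly says ``the same argument shows that the leading term of the right hand side vanishes as well,'' i.e.\ compute the $t^{4d}$-coefficient as $a_d b_d^3 - a_d^3 b_d$ and kill it via Fermat's little theorem in $\mathbb{F}_3$.
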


\noindent
These observations allow us to purge significantly the amount of possible left hand sides: we need to keep track only of those values $x,w$ that produce vanishing constant term in the left hand side. Also, from Observation 3, it follows that, for given $d$, the values of $x$ and $w$ that produce non-zero coefficient of $t^{4d}$ can also be purged from the analysis. Moreover, Observation 2 in turn allows us to reduce the check range for the right hand sides even further. In particular, we have the following
\begin{observation}
For given $x(t), w(t)$, if the value of $x^4 +w^2$ has vanishing constant term, then the linear term also vanishes.
\end{observation}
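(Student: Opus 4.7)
The plan is to look at the degree-$0$ and degree-$1$ Taylor coefficients of $x^4+w^2$ as explicit polynomial expressions in the coefficients of $x(t)$ and $w(t)$, and then use that $-1$ is a non-square in $\mathbb{F}_3$ to force the relevant leading coefficients to vanish.

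Write
\[
x(t) = x_0 + x_1 t + O(t^2), \qquad w(t) = w_0 + w_1 t + O(t^2).
\]
Since the characteristic is $3$, the Frobenius map $u \mapsto u^3$ is additive, so
\[
x(t)^3 \;=\; \sum_i x_i^{\,3}\, t^{3i},
\]
which contains no $t^1$-term. Multiplying by $x(t)$, one reads off that the constant coefficient of $x^4 = x \cdot x^3$ is $x_0^4$ and its linear coefficient is $x_0^{\,3} x_1$. Meanwhile the constant and linear coefficients of $w^2$ are $w_0^2$ and $2 w_0 w_1$, respectively. Using Fermat ($x_0^{\,3}=x_0$ in $\mathbb{F}_3$, so $x_0^4 = x_0^2$), the first two coefficients of $x^4+w^2$ are
\[
[t^0]:\; x_0^2 + w_0^2, \qquad [t^1]:\; x_0 x_1 + 2 w_0 w_1.
\]

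The key arithmetic input is that the nonzero squares in $\mathbb{F}_3$ are just $\{1\}$, so $-1$ is a non-square; consequently $x_0^2 + w_0^2 = 0$ in $\mathbb{F}_3$ forces $x_0 = w_0 = 0$. Plugging this into the formula for the linear coefficient makes it identically zero, which is exactly the claim. The argument uses only elementary facts about $\mathbb{F}_3$ together with the Frobenius identity, so there is no real obstacle; the only subtlety is noticing that vanishing of the constant term is very restrictive precisely because $\mathbb{F}_3$ does not contain $\sqrt{-1}$.
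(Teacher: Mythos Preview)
Your argument is correct and essentially identical to the paper's: both compute the constant and linear coefficients of $x^4+w^2$ directly, then use that $-1$ is a non-square in $\mathbb{F}_3$ to deduce $x_0=w_0=0$ from the vanishing constant term. Your use of Frobenius to read off the low-order coefficients of $x^4$ is a slightly slicker bookkeeping device, but the substance is the same.
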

\begin{proof}
The proof is a direct check. Given
\[
x = a_mt^m + \ldots + a_1t + a_0, \quad w = b_nt^n + \ldots + b_1t + b_0
\]
we have that modulo 3 the left hand side is
\[
x^4 + w^2 = ( \ldots + a_1t + a_0)^4 + (\ldots + b_1t + b_0)^2=
\]
\[
= \ldots (a_1a_0^3 + 2 b_0b_1)t + (a_0^4 + b_0^2)
\]
Now since $-1$ is not a square in $\mathbb F_3$ it follows that
\[
a_0^4 + b_0^2 = 0
\]
if and only if $a_0=b_0=0$ and hence the coefficient of the linear term also vanishes.
\end{proof}

\noindent
Finally, because of Observation 3, for a given $d$ we can exclude from the analysis left hand sides that have $x(t)$ or $w(t)$ of highest possible degrees:
\begin{observation}
If $x(t)$ has degree exactly $d$, or $w(t)$ has degree exactly $2d$, then the coefficient of $t^{4d}$ in the left hand side is non-zero.
\end{observation}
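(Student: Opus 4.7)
The plan is to compute directly the coefficient of $t^{4d}$ in $x^4+w^2$, express it as a sum of two squares in $\mathbb{F}_3$, and then invoke the fact (already used in Observation~4) that $-1$ is not a square in $\mathbb{F}_3$.

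More concretely, I would write
\[
x(t) = a_d t^d + (\text{lower}), \qquad w(t) = b_{2d} t^{2d} + (\text{lower}),
\]
with the convention $a_d = 0$ if $\deg x < d$ and $b_{2d} = 0$ if $\deg w < 2d$. The coefficient of $t^{4d}$ in $x^4$ is then $a_d^4$, and the coefficient of $t^{4d}$ in $w^2$ is $b_{2d}^2$; all other terms in each product contribute to strictly smaller powers of $t$. Hence the $t^{4d}$ coefficient of the left hand side equals $a_d^4 + b_{2d}^2$.

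Now I would apply Fermat's little theorem in $\mathbb{F}_3$: for any $a \in \mathbb{F}_3$ we have $a^3 = a$, so $a^4 = a^2$. Therefore the coefficient reduces to $a_d^2 + b_{2d}^2$. The squares in $\mathbb{F}_3$ are only $0$ and $1$, so this sum vanishes if and only if $a_d = b_{2d} = 0$, exactly as in the proof of Observation~4. The hypothesis that $x(t)$ has degree exactly $d$ or $w(t)$ has degree exactly $2d$ means precisely that $(a_d, b_{2d}) \ne (0,0)$, giving the desired conclusion.

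There is no real obstacle here; the statement is essentially a one-line consequence of the simplification $a^4 = a^2$ on $\mathbb{F}_3$ together with $-1 \notin (\mathbb{F}_3)^2$, and the only care needed is to treat $a_d$ and $b_{2d}$ as formal leading coefficients (possibly zero) so that the single identity $a_d^2 + b_{2d}^2 = 0 \iff a_d = b_{2d} = 0$ covers both cases of the disjunction in the statement simultaneously.
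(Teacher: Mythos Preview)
Your proof is correct and is essentially the same as the paper's: both compute the $t^{4d}$ coefficient as $a_d^4+b_{2d}^2$ and then use that nonzero squares in $\mathbb F_3$ equal $1$ (equivalently, $-1$ is not a square) to conclude this vanishes only when $a_d=b_{2d}=0$. If anything, your version is slightly more careful in explicitly allowing one of $a_d,b_{2d}$ to be zero so that the ``or'' in the hypothesis is handled cleanly.
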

\begin{proof}
\[
x^4 + w^2  = (a_dt^d + \ldots)^4 + (b_{2d}t^{2d} + \ldots)^2 = (a_d^4+b_{2d}^2)t^{4d} + \ldots
\]
and because modulo 3 all non-zero squares are equal to 1, the leading coefficient is non-zero given that $a_d\ne 0$ and $b_{2d}\ne 0$.
\end{proof}

The above observations allow us to reduce significantly the lists of left and right hand sides by purging apriori incoherent data. The following section discusses our final algorithm.

\section{Algorithm and results}
The algorithm takes as the input argument the degree $d$ and consists of three steps:
\begin{enumerate}
	\item generate two sets of all admissible left and right hand sides,
	\item find set-theoretical intersection of two sets,
	\item reconstruct rational parametrizations.
\end{enumerate}
In our implementation each polynomial was precomputed from the triadic expansion of integer number and was stored in memory in terms of its coefficients, 2 bits per coefficient. This compact form highly decreased the memory requirements and allowed to do faster computations with lower memory footprint. The following picture shows the correspondence between polynomials $y,z$ and their binary representation in memory.

\vspace{2mm}
\centerline{
\includegraphics{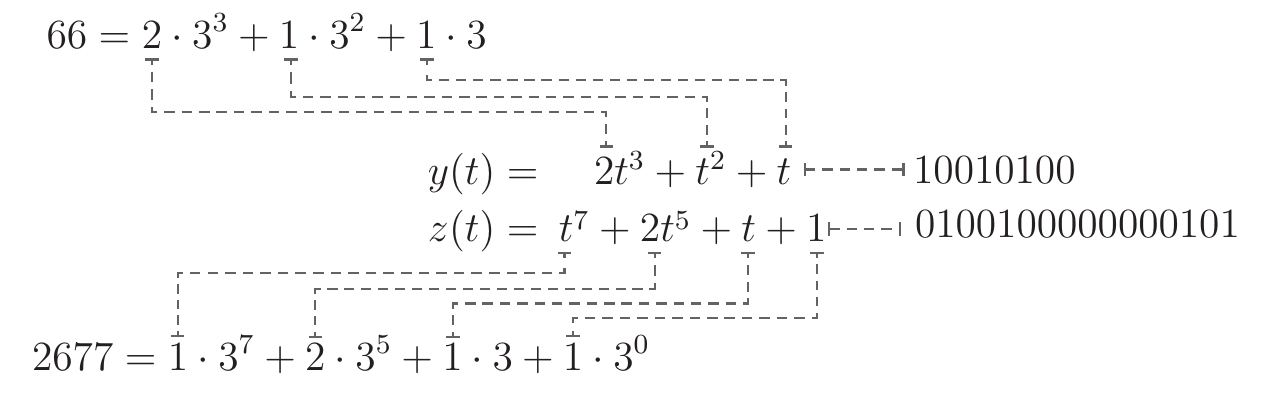}
}

At the preprocessing stage we computed lists of admissible polynomials $x,y,z,w$, then their exponents $x^4,w^2,y^3,z^3$ appearing in formula (\ref{eq:main}), and finally the list of right hand sides $z^3y-y^3z$. This process required reconstruction of polynomials from the binary form but due to the chosen encoding this conversion was efficiently performed using bit-shift operators. This list was sorted and took about 10 GB of operative memory.

The complete list of left hand sides $x^4+w^2$ did not fit into operative memory of the computer used in the experiment. We resolved this issue by generating the list dynamically: as the block of left hand sides was checked for matching, it was erased from the memory and a new block was generated. Moreover, to increase the processing speed we used parallel computing. If a match was found, the corresponding rational parametrization was reconstructed. The Figure 3 shows the scheme of our final implementation.

\begin{figure}[ht]
\centerline{
\includegraphics{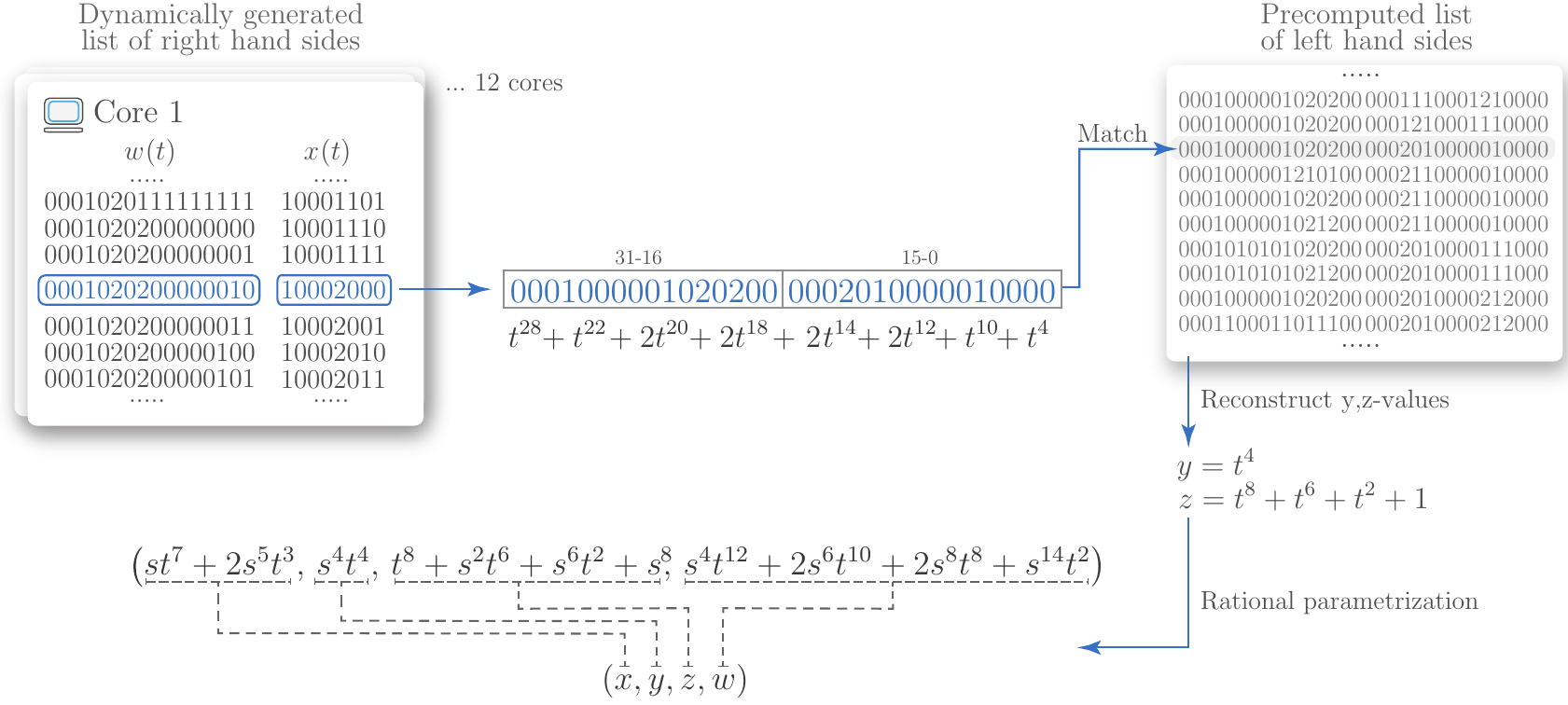}
}
\caption {The scheme of implementation}
\end{figure}

The experiment for $d\le 8$ has been conducted on Intel(R) Xeon(R) CPU with 2 processors, each with 6 cores (total of $2\times 6 = 12$ cores) running at 2.9GHz. The system had a total memory of 128 GB. Preprocessing stage took about 16 hours of CPU time while the main parallel cycle took about 14 days of CPU time. For degree $d=10$ we might expect at most $\times 27$ increase in a computational time.

The experiment revealed exactly $22080$ rational parametrizations, or equivalently $920$ rational curves, all of which had degree $d=8$. The redundancy of obtaining all parametrizations for every curve seems unavoidable. To eliminate it at the computational stage would require one to precompute the action of the group of projective automorphisms. This seems to be much a more computationally expensive task than simply removing parametrizations within the same orbit as the list is computed. The full list of curves as well as source codes are included in the arXiv posting.

The degree 8 curve discovered in \cite{vanLuijk} is also contained in our list. We want to stress the difference between our approaches. In \cite{vanLuijk} the authors first found the equation of the curve and then reconstructed the rational parametrization. It should also be noted that the key to the effective computation was partially due to our choice of surface from the three in \cite{Alvarado}. In particular, the defining equation for one of the remaining two surfaces contains coefficients from the field $\mathbb F_9$. This would make a congruence argument more difficult to implement. From the other side, we expect that an approach similar to ours can still be applied to varieties over prime fields whose defining equations contain enough symmetry.

\section*{Acknowledgments}
We thank Brendan Hassett for mentioning the problem as well as suggesting improvements. We also thank Anthony V{\'a}rilly-Alvarado and Damiano Testa for helpful conversations. Finally, we thank the referee for a thorough review and helpful remarks. The first author was supported by NSF grant 0968349.

\begin{bibdiv}
	\begin{biblist}

\bib{Tschinkel}{article}{
	author={Derenthal, Ulrich},
	author={Tschinkel, Yuri},
	title={Universal torsors over del Pezzo surfaces and rational points},
	journal={NATO Science Series II: Mathematics, Physics and Chemistry},
	volume={237},
	date={2007},
	pages={169--196}
}
\bib{Elsenhans}{article}{
	author={A.-S. Elsenhans},
	author={J. Jahnel},
	title={The Diophantine equation $x^4 + 2y^4 = z^4 + 4w^4$ - An investigation by computer for $|x|,|y|,|z|,|w|< 2.5\cdot 10^6$},
	journal={Mathematics of Computation},
	volume={75},
	date={2006},
	pages={935--940}
}
\bib{Elsenhans_Cubic}{article}{
	author={A.-S. Elsenhans},
	author={J. Jahnel},
	title={Experiments with general cubic surfaces},
	journal={Progress in Mathematics, Algebra, Arithmetic, and Geometry, In Honor of Yu. I. Manin (eds. Y.Tschinkel and Y.Zarhin)},
	volume={269},
	date={2007},
	pages={637--654}
}
\bib{vanLuijk}{article}{
	author={Festi, Dino},
	author={van Luijk, Ronald},
	title={Unirationality of del Pezzo surfaces of degree two over finite fields},
	note={To appear, preprint available at {\tt arXiv:1304.6798}}
}
\bib{Harris}{book}{
	author={Harris, Joe},
	title={Algebraic Geometry, A First Course},
	series={Graduate Texts in Mathematics},
	language={English},
	volume={133},
	publisher={Springer},
	date={1995},
	pages={330},
}
\bib{Tschinkel04}{article}{
	author={Kresch, Andrew},
	author={Tschinkel, Yuri},
	title={On the arithmetic of del Pezzo surfaces of degree two},
	journal={Proceedings London Math. Soc.},
	volume={(3) 89},
	date={2004},
	pages={545--569}
}
\bib{Kollar}{book}{
	author={Koll\'ar, J\'anos},
	title={Rational curves on algebraic varieties},
	series={A Series of Modern Surveys in Mathematics},
	volume={32},
	language={English},
	publisher={Springer},
	date={1996},
	pages={321},
}
\bib{Kollar02}{article}{
	author={Koll\'ar, J\'anos},
	title={Unirationality of cubic hypersurfaces},
	journal={J. Inst. Math. Jussieu},
	volume={1(3)},
	date={2002},
	pages={467--476}
}
\bib{Kollar08}{article}{
	author={Koll\'ar, J\'anos},
	title={Looking for rational curves on cubic hypersurfaces},
	journal={Higher-Dimensional Geometry over Finite Fields},
	volume={16},
	date={2008},
	pages={92--122}
}
\bib{Manin}{book}{
	author={Manin, Yuri},
	title={Cubic forms, 2 edition},
	language={English},
	series={North-Holland Mathematical Library},
	publisher={North Holland},
	date={1986},
	pages={326},
	isbn={0444878238}
}
\bib{RatAlgCurves}{book}{
	author={P\'erez-Diaz, S.},
	author={Sendra, R.},
	author={Winkler, F.},
	title={Rational algebraic curves: a computer algebra approach},
	series={Algorithms and Computation in Mathematics},
	volume={22},
	language={English},
	publisher={Springer},
	date={2008},
	pages={270}
}
\bib{Alvarado}{article}{
	author={Salgado, Cecilia},
	author={Testa, Damiano},
	author={V{\'a}rilly-Alvarado, Anthony},
	title={On the Unirationality of del Pezzo surfaces of degree two},
	journal={Journal of the London Mathematical Society},
	volume={90},
	date={2014},
	pages={121--139}
}
	\end{biblist}
\end{bibdiv}

\end{document}